\documentclass{amsart}
\usepackage[english]{babel}
\usepackage{amsmath,amsthm}
\usepackage[a4paper]{geometry}
\geometry{vmargin=2cm,hmargin=3.3cm} 
\theoremstyle{plain}
\newtheorem{theorem}{Theorem}[section]

\newtheorem{lemma}[theorem]{Lemma}

\numberwithin{theorem}{section}
\numberwithin{equation}{section}
\numberwithin{table}{section}
\numberwithin{figure}{section}

\theoremstyle{definition}

\usepackage{fancyhdr}
\usepackage{enumerate}
\pagestyle{fancy}
\fancyhf{} 
\fancyhead[LE,RO]{\bfseries-~\thepage~-}
\fancyhead[LO]{\rm\rightmark}
\fancyhead[RE]{\rm\leftmark}
\fancyhead[R]{\thepage}\fancyhead[L]{\it \footnotesize Optimal bounds for the Neuman-S\'{a}ndor means in terms of geometric and contra-harmonic means}

\addtolength{\headheight}{0.5pt} 
\fancypagestyle{plain}{%
    \fancyhead{} 
}

\date{}
\begin{document}
\title{Optimal bounds for the Neuman-S\'{a}ndor means in terms of geometric and contra-harmonic means}
\author{Tiehong Zhao}
\address{Department of mathematics, Hangzhou Normal University, Hangzhou 310036, P.R. China}%
\email{tiehongzhao@gmail.com}
\author{Yuming Chu}
\address{Department of mathematics, Huzhou Teachers College, Huzhou 313000, P.R. China}%
\email{chuyuming@hutc.zj.cn}
\author{Baoyu Liu}
\address{School of Science, Hangzhou Dianzi University, Hangzhou 310018, P.R. China}%
\email{627847649@qq.com}
\subjclass{26E60}
\keywords{Neuman-S\'{a}ndor mean, arithmetic mean, contra-harmonic
mean.}
\thanks{This research was supported by the Natural Science Foundation of China under Grants 11071069 and 11171307, and the Innovation Team Foundation of the Department of Education of Zhejiang Province under Grant T200924.}
\date{}
\maketitle

\begin{abstract}
In this article, we prove that the double inequality
$$\alpha G(a,b)+(1-\alpha)C(a,b)<M(a,b)<\beta G(a,b)+(1-\beta)C(a,b)$$
holds true for all $a,b>0$ with $a\neq b$ if and only if $\alpha\geq
5/9$ and $\beta\leq 1-1/[2\log(1+\sqrt{2})]=0.4327\cdots$, where
$G(a,b),C(a,b)$ and $M(a,b)$ are respectively the geometric,
contra-harmonic and Neuman-S¨¢ndor means of $a$ and $b$.
\end{abstract}
\section{Introduction}
For $a,b>0$ with $a\neq b$ the Neuman-S\'{a}ndor mean $M(a,b)$ \cite{NS} is defined by
\begin{equation}\label{eqn:1.1}
M(a,b)=\frac{a-b}{2\sinh^{-1}\left(\frac{a-b}{a+b}\right)},
\end{equation}
where $\sinh^{-1}(x)=\log(x+\sqrt{x^2+1})$ is the inverse hyperbolic sine function.

Recently, the Neuman-S\'{a}ndor mean has been the subject of
intensive research. In particular, many remarkable inequalities for
the Neuman-S\'{a}ndor mean  can be found in the literature
\cite{NS,NS2}.

Let $G(a,b)=\sqrt{ab}$, $L(a,b)=(b-a)/(\log b-\log a)$,
$P(a,b)=(a-b)/(4\arctan\sqrt{a/b}-\pi)$, $A(a,b)=(a+b)/2$,
$T(a,b)=(a-b)/[2\arcsin((a-b)/(a+b))]$, $Q(a,b)=\sqrt{(a^2+b^2)/2}$
and $C(a,b)=(a^2+b^2)/(a+b)$ be the geometric, logarithmic, first
Seiffert, arithmetic, second Seiffert, quadratic and contra-harmonic
means of $a$ and $b$, respectively. Then it is well-known that the
inequalities
$$G(a,b)<L(a,b)<P(a,b)<A(a,b)<M(a,b)<T(a,b)<Q(a,b)<C(a,b)$$
hold true for $a,b>0$ with $a\neq b$.

Neuman and S\'{a}ndor \cite{NS,NS2} established that
$$A(a,b)<M(a,b)<T(a,b),$$
$$P(a,b)M(a,b)<A^2(a,b),$$
$$A(a,b)T(a,b)<M^2(a,b)<[A^2(a,b)+T^2(a,b)]/2$$
hold true for all $a,b>0$ with $a\neq b$.

Let $0<a,b<1/2$ with $a\neq b$, $a'=1-a$ and $b'=1-b$. Then the following Ky Fan inequalities
$$\frac{G(a,b)}{G(a',b')}<\frac{L(a,b)}{L(a',b')}<\frac{P(a,b)}{P(a',b')}<\frac{A(a,b)}{A(a',b')}<\frac{M(a,b)}{M(a',b')}<\frac{T(a,b)}{T(a',b')}$$
were presented in \cite{NS}.

Let $L_p(a,b)=\left[(b^{p+1}-a^{p+1})/((p+1)(b-a))\right]^{1/p}
(p\neq-1,0)$, $L_0=1/e(b^b/a^a)^{1/(b-a)}$ and
$L_{-1}(a,b)=(b-a)/(\log b-\log a)$ be the th generalized
logarithmic mean of $a$ and $b$. Li et al. \cite{LLC} showed that
the double inequality $L_{p_0}(a,b)<M(a,b)<L_2(a,b)$ holds true for
all $a,b>0$ with $a\neq b$, where $p_0=1.843\cdots$ is the unique
solution of the equation $(p+1)^{1/p}=2\log(1+\sqrt{2})$.

In \cite{Ne}, Neuman proved that the double inequalities
$$\alpha Q(a,b)+(1-\alpha)A(a,b)<M(a,b)<\beta Q(a,b)+(1-\beta)A(a,b)$$
and
$$\lambda Q(a,b)+(1-\lambda)A(a,b)<M(a,b)<\mu Q(a,b)+(1-\mu)A(a,b)$$
hold for all $a,b>0$ with $a\neq b$ if and only if $\alpha\leq(1-\log(\sqrt{2}+1))/[(\sqrt{2}-1)\log(\sqrt{2}+1)]=0.3249\cdots$, $\beta\geq1/3$,
$\lambda\leq(1-\log(\sqrt{2}+1))/\log(\sqrt{2}+1)=0.1345\cdots$ and $\mu\geq1/6$.

The main purpose of this paper is to find the least value $\alpha$ and the greatest value $\beta$ such that the double inequality
$$\alpha G(a,b)+(1-\alpha)C(a,b)<M(a,b)<\beta G(a,b)+(1-\beta)C(a,b)$$
holds for all $a,b>0$ with $a\neq b$.

Our main result is presented in Theorem \ref{th:1.1}.

\begin{theorem}\label{th:1.1}The inequality
\begin{equation}\label{eqn:1.2}
\alpha G(a,b)+(1-\alpha)C(a,b)<M(a,b)<\beta G(a,b)+(1-\beta)C(a,b)
\end{equation}
holds true for all $a,b>0$ with $a\neq b$ if and only if $\alpha\geq
5/9$ and $\beta\leq 1-1/[2\log(1+\sqrt{2})]=0.4327\cdots$.
\end{theorem}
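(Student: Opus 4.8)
The plan is to use the common homogeneity and symmetry of $G$, $C$ and $M$ to turn \eqref{eqn:1.2} into a one–variable statement. Assuming $a>b>0$ and writing $t=(a-b)/(a+b)\in(0,1)$, $A=(a+b)/2$, one computes $G(a,b)=A\sqrt{1-t^2}$, $C(a,b)=A(1+t^2)$ and $M(a,b)=At/\sinh^{-1}(t)$. Since $(1+t^2)-\sqrt{1-t^2}>0$ on $(0,1)$, after dividing \eqref{eqn:1.2} by $A$ the left inequality becomes $\alpha>f(t)$ and the right one becomes $\beta<f(t)$, where
\[
f(t)=\frac{(1+t^2)-t/\sinh^{-1}(t)}{(1+t^2)-\sqrt{1-t^2}}\qquad(0<t<1).
\]
Hence \eqref{eqn:1.2} holds for all admissible $a,b$ if and only if $\beta<f(t)<\alpha$ for every $t\in(0,1)$, and Theorem~\ref{th:1.1} follows once we show that $f$ is continuous and strictly decreasing on $(0,1)$ with $f(0^+)=5/9$ and $f(1^-)=1-1/[2\log(1+\sqrt2)]$: then the range of $f$ is exactly the open interval $\bigl(1-1/[2\log(1+\sqrt2)],\,5/9\bigr)$, neither endpoint is attained, and the stated conditions on $\alpha$ and $\beta$ drop out.

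The two limits are routine. For $t\to1^-$ we use $\sinh^{-1}(1)=\log(1+\sqrt2)$ and $\sqrt{1-t^2}\to0$, which gives $f(1^-)=1-1/[2\log(1+\sqrt2)]$. For $t\to0^+$ we insert the Maclaurin series $\sinh^{-1}(t)=t-t^3/6+3t^5/40-\cdots$, which yields $t/\sinh^{-1}(t)=1+t^2/6-17t^4/360+\cdots$ and $\sqrt{1-t^2}=1-t^2/2-t^4/8-\cdots$; thus the numerator of $f$ equals $\frac56 t^2+\frac{17}{360}t^4+\cdots$ and the denominator $\frac32 t^2+\frac18 t^4+\cdots$, so $f(0^+)=5/9$.

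The real work is the monotonicity of $f$. Multiplying numerator and denominator by $\sinh^{-1}(t)$, write $f=N/D$ with $N(t)=(1+t^2)\sinh^{-1}(t)-t$ and $D(t)=[(1+t^2)-\sqrt{1-t^2}]\sinh^{-1}(t)$, both vanishing at $t=0$; then $f$ is strictly decreasing on $(0,1)$ provided the auxiliary function $\Lambda:=ND'-N'D$ is positive there (indeed $f'=-\Lambda/D^2$). The obstacle is that $M/A$ is tangent to its geometric/contra-harmonic approximants to high order at $t=0$: here $N$ and $D$ vanish to order $t^3$ and $\Lambda$ to an even higher order, so $\Lambda$ together with several of its derivatives vanishes at $0$, and a one–step ``differentiate and read off the sign'' argument is not available. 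I would handle this by substituting the series of $\sinh^{-1}(t)$ and of $\sqrt{1-t^2}$, expanding $\Lambda(t)=\sum_n\lambda_n t^{2n+1}$, and proving $\lambda_n\ge0$ for all $n$ (with the first nonzero one strictly positive), which forces $\Lambda(t)>0$ because $t\in(0,1)$. Verifying the sign of every Maclaurin coefficient — a coefficientwise comparison of two explicit power series — is the main difficulty; the precise constants $5/9$ and $1-1/[2\log(1+\sqrt2)]$ emerge from the leading coefficient and from the value $f(1^-)$. An equivalent, perhaps more transparent route is to prove the two bounds separately: $f(t)<5/9$ is the same as $\sinh^{-1}(t)\,[4(1+t^2)+5\sqrt{1-t^2}]<9t$, and $f(t)>1-1/[2\log(1+\sqrt2)]$ reduces similarly; for each one isolates the radical $\sqrt{1-t^2}$, squares, and is left with the positivity of an elementary function of $t$ and $\sinh^{-1}(t)$, settled by differentiation together with the endpoint values.

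Once $f$ is known to be continuous and strictly decreasing on $(0,1)$ with $f(0^+)=5/9$ and $f(1^-)=1-1/[2\log(1+\sqrt2)]$, the equivalence from the first step shows that \eqref{eqn:1.2} holds for all $a,b>0$ with $a\neq b$ if and only if $\alpha\ge5/9$ and $\beta\le1-1/[2\log(1+\sqrt2)]$, which is precisely the assertion of Theorem~\ref{th:1.1}.
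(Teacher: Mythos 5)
Your reduction is sound and matches the paper's setup: the function $f(t)=\frac{(1+t^2)-t/\sinh^{-1}(t)}{(1+t^2)-\sqrt{1-t^2}}$ is exactly the ratio $(C-M)/(C-G)$ appearing in \eqref{eqn:3.1}, your two endpoint limits are computed correctly and coincide with \eqref{eqn:3.3}--\eqref{eqn:3.4} (this is precisely the paper's necessity argument), and the logical equivalence ``\eqref{eqn:1.2} holds for all $a\neq b$ iff $\beta<f(t)<\alpha$ for all $t\in(0,1)$'' is correct, so that everything comes down to proving $\lambda_0<f(t)<5/9$ on $(0,1)$.

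That last inequality, however, is where all the analytic weight lies, and you do not actually prove it. You propose to establish the stronger statement that $f$ is strictly decreasing by showing $\Lambda:=ND'-N'D>0$, and to do that by expanding $\Lambda(t)=\sum_n\lambda_nt^{2n+1}$ and verifying $\lambda_n\ge0$ for all $n$ --- but you offer no argument that these infinitely many coefficients are in fact nonnegative. This is a nontrivial conjecture: $\Lambda$ is built from the series of $\sinh^{-1}(t)$ (alternating signs) and of $\sqrt{1-t^2}$ (all nonconstant coefficients negative), so sign cancellations are genuinely in play, and ``coefficientwise comparison of two explicit power series'' is not a proof until it is carried out; moreover, positivity of $\Lambda$ on $(0,1)$ does not require all $\lambda_n\ge0$, so the strategy could fail even if the conclusion is true. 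Your fallback (isolate the radical, square, ``settle by differentiation'') is likewise only a declaration of intent. For comparison, this single step is where the paper spends essentially its entire effort: Lemma \ref{le:2.1} proves the two bounds separately, via the auxiliary function $\varphi_p(t)=\sinh^{-1}(\sqrt{1-t^2})-\sqrt{1-t^2}/[(p-1)t^2+pt+2-2p]$ and a two-case, multi-subcase sign analysis of $\varphi_p'$ with numerically located critical points; note that the paper never needs (and never proves) monotonicity of $f$. Until $\lambda_0<f<5/9$ is established by a complete argument, the proposal has a genuine gap at its core.
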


\section{Lemmas}

In order to prove our main result we need a lemma, which we present in this section.

\medskip

\begin{lemma}\label{le:2.1}
Let $p\in(0,1)$, $\lambda_0=1-1/[2\log(1+\sqrt{2})]=0.4327\cdots$ and
\begin{equation}\label{eqn:2.1}
\varphi_p(t)=\sinh^{-1}(\sqrt{1-t^2})-\frac{\sqrt{1-t^2}}{(p-1)t^2+pt+2-2p}
\end{equation} Then $\varphi_{5/9}(t)<0$ and $\varphi_{\lambda_0}(t)>0$ for all $t\in(0,1)$.
\end{lemma}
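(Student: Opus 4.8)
The plan is to reduce each of the two claimed sign statements to a study of a single one-variable function on $(0,1)$, and then use the standard "monotonicity of the ratio of two functions vanishing at an endpoint" technique.

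First I would set up the substitution. In the inequality $\eqref{eqn:1.2}$ put $t=(a-b)/(a+b)$... wait, let me reconsider. Actually in the lemma the variable is $t$ with $\sqrt{1-t^2}$ appearing, so the natural substitution is $a/b=$ something producing $t\in(0,1)$; concretely one takes $b=1$ and writes things so that $t$ plays the role of $G/A=\sqrt{ab}/A$, since $G/A=\sqrt{1-((a-b)/(a+b))^2}$. Let me just plan at the level of the lemma as stated. Observe that $\varphi_p(t)$ has the form $\sinh^{-1}(u)-u/Q(t)$ where $u=\sqrt{1-t^2}$ and $Q(t)=(p-1)t^2+pt+2-2p$. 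So $\varphi_p(t)$ has the sign of $\sinh^{-1}(\sqrt{1-t^2})\cdot Q(t)-\sqrt{1-t^2}$ provided $Q(t)>0$ on $(0,1)$ — which should be checked first (for $p=5/9$ and $p=\lambda_0$, $Q$ is a downward-or-upward parabola; evaluate $Q(0)=2-2p>0$, $Q(1)=p-1+p+2-2p=1>0$, and check the vertex, so $Q>0$ throughout).

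Next, the endpoint behavior. At $t\to1^-$, $\sqrt{1-t^2}\to0$ and $\sinh^{-1}(\sqrt{1-t^2})\sim\sqrt{1-t^2}$, while $Q(1)=1$, so $\varphi_p(1^-)=0$ for every $p$; good, this is the "common root" that makes the monotonicity method applicable. At $t\to0^+$, $\varphi_p(0)=\sinh^{-1}(1)-1/(2-2p)=\log(1+\sqrt2)-1/(2(1-p))$. This is exactly $0$ when $1-p=1/(2\log(1+\sqrt2))$, i.e. $p=\lambda_0$; and it is negative when $p=5/9$ (since then $1/(2(1-p))=9/8=1.125>\log(1+\sqrt2)=0.8814\cdots$). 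So the two endpoints already distinguish the two cases, and the task is to show $\varphi_p$ does not change sign in between. I would do this by proving $\varphi_p$ is monotone on $(0,1)$, or, more robustly, by writing $\varphi_p(t)=\sqrt{1-t^2}\,\bigl(\sinh^{-1}(\sqrt{1-t^2})/\sqrt{1-t^2}-1/Q(t)\bigr)$ and analyzing $g(t):=\sinh^{-1}(\sqrt{1-t^2})/\sqrt{1-t^2}$ versus $1/Q(t)$. The function $s\mapsto \sinh^{-1}(s)/s$ is decreasing on $(0,\infty)$, so $g$ is increasing in $t$; one then compares with the behavior of $1/Q(t)$. Alternatively — and this is probably cleanest — substitute $t=1/\cosh\theta$ or rather parametrize $\sqrt{1-t^2}=\sin\psi$... actually the slick move is: since $\sinh^{-1}(\sqrt{1-t^2})$ is awkward, put $r=\sinh^{-1}(\sqrt{1-t^2})$, so $\sqrt{1-t^2}=\sinh r$, $t=\sqrt{1-\sinh^2 r}$... this requires $\sinh r<1$, fine for $t\in(0,1)$. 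Then $\varphi_p$ becomes $r-\sinh r/Q$ with $t$ expressed through $r$; I'd differentiate and hope for a tractable sign.

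I expect the main obstacle to be exactly this middle step: showing no sign change on the open interval. Differentiating $\varphi_p$ directly gives $\varphi_p'(t)=\dfrac{-t}{\sqrt{1-t^2}}\cdot\dfrac{1}{\sqrt{2-t^2}}+\dfrac{t/\sqrt{1-t^2}\cdot Q(t)+\sqrt{1-t^2}\,Q'(t)}{Q(t)^2}$ (using $\frac{d}{dt}\sinh^{-1}(\sqrt{1-t^2})=\frac{-t}{\sqrt{1-t^2}\sqrt{2-t^2}}$), which after clearing denominators reduces the sign of $\varphi_p'$ to a polynomial-in-$t$ times elementary factors; establishing that this auxiliary polynomial keeps a constant sign on $(0,1)$ (at least for the two specific $p$), possibly after one more differentiation, is the crux. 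For $p=\lambda_0$ the coefficients involve $\log(1+\sqrt2)$ and one must be a little careful. If $\varphi_p$ turns out not to be globally monotone, the fallback is to locate the unique critical point and check that the extreme value has the right sign; but I would first bet on monotonicity after reducing to the polynomial. Once the sign of $\varphi_p$ on $(0,1)$ is pinned down for $p=5/9$ and $p=\lambda_0$, the lemma follows immediately, and (in the main theorem) inequality $\eqref{eqn:1.2}$ at the sharp endpoints is equivalent to these sign statements, with sharpness read off from the endpoint values $\varphi_p(0)$ and $\varphi_p(1^-)$ together with the monotonicity in $p$ of $\alpha G+(1-\alpha)C$.
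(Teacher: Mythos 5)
Your setup matches the paper's: the endpoint values $\varphi_p(1^-)=0$ and $\varphi_p(0)=\log(1+\sqrt{2})-1/(2(1-p))$ (zero at $p=\lambda_0$, negative at $p=5/9$), the derivative formula $\frac{d}{dt}\sinh^{-1}(\sqrt{1-t^2})=-t/(\sqrt{1-t^2}\sqrt{2-t^2})$, and the reduction of the sign of $\varphi_p'$ to that of a numerator of the form $P_1(t)+P_2(t)\sqrt{2-t^2}$ are all correct and are exactly the paper's first moves. But your primary bet --- that $\varphi_p$ is globally monotone for both values of $p$ --- is provably wrong for $p=\lambda_0$: you yourself computed $\varphi_{\lambda_0}(0)=0=\varphi_{\lambda_0}(1^-)$, so a monotone $\varphi_{\lambda_0}$ would be identically zero. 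The fallback you mention in one sentence is in fact the required argument there: one must show that $\varphi_{\lambda_0}'$ changes sign exactly once, from positive to negative, which the paper obtains by proving that the numerator $f_{\lambda_0}$ is decreasing-then-increasing with $f_{\lambda_0}(0)>0$ and $f_{\lambda_0}(1)=0$. (Once that unimodality is established, positivity of $\varphi_{\lambda_0}$ follows from the two vanishing endpoint values with no need to evaluate the interior maximum.) For $p=5/9$ global monotonicity does hold, since the numerator turns out to be positive throughout, making $\varphi_{5/9}$ increasing up to its zero at $t=1$; but that too has to be proved.

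Second, and more seriously, the step you correctly identify as ``the crux'' is not carried out, and it is essentially the whole proof. The numerator is not a polynomial but a mixed expression $P_1(t)+P_2(t)\sqrt{2-t^2}$ whose coefficients in the $\lambda_0$ case involve $\delta=\log(1+\sqrt{2})$; the paper needs two further differentiations, a split of $(0,1)$ into subintervals ($(0,3/4]$ and $(3/4,1)$ in Case 1, $(0,1/2]$ and $(1/2,1)$ in Case 2), bounds replacing $\sqrt{2-t^2}$ by constants, and numerically located roots of auxiliary cubics and quartics to pin down the sign pattern. Nothing in your proposal indicates how this analysis would go, and ``possibly after one more differentiation'' understates it. As written, the proposal is a sound plan of attack whose decisive technical content is missing.
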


\begin{proof}
From (\ref{eqn:2.1}), we have
\begin{align}\label{eqn:2.2}
\varphi_p(1)&=0,\\ \label{eqn:2.3}
\varphi_p(0)&=\log(1+\sqrt{2})-\frac{1}{2(1-p)},\\ \label{eqn:2.4}
\varphi'_p(t)&=\frac{f_p(x)}{\sqrt{1-t^2}\sqrt{2-t^2}[(p-1)t^2+pt+2-2p]^2}\quad t\in(0,1)
\end{align}
where
\begin{align}\label{eqn:2.5}
\begin{split}
f_p(t)=&-t[(p-1)t^2+pt+2-2p]^2+[2(p-1)t+p](1-t^2)\sqrt{2-t^2}\\
&\hspace{4cm}+t\sqrt{2-t^2}[(p-1)t^2+pt+2-2p].
\end{split}
\end{align}

\medskip
We divide the proof into two cases.
\setlength\leftmargini{.5cm}
\begin{description}
\item[Case 1] $p=5/9$. Then (\ref{eqn:2.5}) leads to
\begin{align}\label{eqn:2.6}
f_{5/9}(1)&=0,\\ \label{eqn:2.7}
f'_{5/9}(t)&=\frac{g_{5/9}(t)}{81\sqrt{2-t^2}}
\end{align}
where
\begin{equation}\label{eqn:2.8}
g_{5/9}(t)=-45t+216t^2-144t^4-(64+160t-117t^2-160t^3+80 t^4)\sqrt{2-t^2}
\end{equation}

\medskip

We divide the discussion of this case into two subcases.
\begin{description}
\item[Subcase 1.1] $t\in(0,3/4]$. Then from (\ref{eqn:2.8}) we clearly see that
\begin{equation}\label{eqn:2.9}
\frac{g_{5/9}(t)}{\sqrt{2-t^2}}\leq216t^2-\frac{45t+144t^4}{\sqrt{2}}-(64+160t-117t^2-160t^3+80 t^4):=\mu(t)
\end{equation}
Differentiating $\mu(t)$ yields
\begin{align}\label{eqn:2.10}
\mu'(t)&=-160-\frac{45}{\sqrt{2}}+666t+480t^2-\left(288\sqrt{2}+320\right)t^3,\\
\label{eqn:2.11} \mu''(t)&=666+960t-(960+864\sqrt{2})t^2.
\end{align}
From (\ref{eqn:2.10}) and scientific computation we know that there exists unique $t_1=0.25869\cdots$ in $(0,3/4]$ satisfying the equation $\mu'(t_1)=0$. It follows from (\ref{eqn:2.11}) that $\mu''(t_1)=768.33\cdots>0$ and $t_1$ is a unique extreme minimum point of $\mu(t)$ in $(0,3/4]$. Therefore we obtain
\begin{equation}\label{eqn:2.12}
\mu(t)\leq\max\{\mu(0),\mu(3/4)\}=-10.5824\cdots<0.
\end{equation}
Inequalities (\ref{eqn:2.9}) and (\ref{eqn:2.12}) lead to the conclusion that
$$g_{5/9}(t)<0$$ for $t\in(0,3/4]$.

\medskip

\item[Subcase 1.2] $t\in(3/4,1)$. Then (\ref{eqn:2.8}) gives
\begin{align}\label{eqn:2.13}
\begin{split}
g'_{5/9}(t)\sqrt{2-t^2}&=-320+532t+1280t^2-991t^3-640t^4+400t^5\\
&\hspace{2.7cm}-(45-432t+576t^3)\sqrt{2 - t^2}:= h_{5/9}(t),\\
\end{split}
\end{align}
From (\ref{eqn:2.13}) we get
\begin{align}\label{eqn:2.14}
h_{5/9}(1)&=72,\\ \label{eqn:2.15}
h'_{5/9}(t)&=\frac{A(t)}{\sqrt{2-t^2}}+B(t)
\end{align}
where \begin{align}\label{eqn:2.16}
A(t)&=864 + 45 t - 4320 t^2 + 2304 t^4,\\ \label{eqn:2.17}
B(t)&=532 + 2560 t - 2973 t^2 - 2560 t^3 + 2000 t^4.
\end{align}
Equation (\ref{eqn:2.16}) leads to
\begin{align}\label{eqn:2.18}
A'(3/4)&=-2547<0,\\ \label{eqn:2.19} A'(1)&=621>0,\\
\label{eqn:2.20} A''(t)&=-8640 + 27648 t^2>0
\end{align}
for $3/4<t<1$.

\medskip

From (\ref{eqn:2.18})-(\ref{eqn:2.20}) we clearly see that there
exists $t_2\in(3/4,1)$ such that $A(t)$ is strictly decreasing in
$(3/4,t_2]$ and strictly increasing in $[t_2,1)$. So we get
\begin{equation}\label{eqn:2.21}
A(t)\leq\max\{A(3/4,A(1)\}=-803.25\cdots<0.
\end{equation}

\medskip

Equation (\ref{eqn:2.15}) and inequality (\ref{eqn:2.21}) lead to
\begin{equation}\label{eqn:2.22}
h'_{5/9}(t)<\frac{A(t)}{\sqrt{2}}+B(t):=\eta(t).
\end{equation}

\medskip

Computing $\mu(t)$ yields
\begin{align}\label{eqn:2.23}
\eta(3/4)&=-235.484\cdots<0,\\ \label{eqn:2.24}
\eta'(1)&=-2626.886\cdots<0,\\ \label{eqn:2.25}
\eta''(3/4)&=921.522\cdots>0,\\ \label{eqn:2.26}
\eta'''(t)=384[-40+(125+72\sqrt{2})t]&>\eta'''(3/4)=49965.132\cdots>0
\end{align}
for $t\in(3/4,1)$.

\medskip

Inequalities (\ref{eqn:2.23})-(\ref{eqn:2.26}) implies that $\eta(t)<0$ for $t\in(3/4,1)$. From (\ref{eqn:2.8}), (\ref{eqn:2.13}), (\ref{eqn:2.14}) and (\ref{eqn:2.22}), we see
that $g_{5/9}(t)$ is strictly increasing in $(3/4,1)$. So we obtain
$$g_{5/9}(t)<g_{5/9}(1)=0$$ for $3/4<t<1$.
\end{description}

\medskip

Combining the last conclusions in subcases 1.1 and 1.2 we have
\begin{equation}\label{eqn:2.27}
g_{5/9}(t)<0
\end{equation}
for all $t\in(0,1)$.

\medskip

Therefore, $\varphi_{5/9}(t)<0$ for all $t\in(0,1)$ follows easily from (\ref{eqn:2.2}), (\ref{eqn:2.4}), (\ref{eqn:2.6}), (\ref{eqn:2.7}) and (\ref{eqn:2.27}).

\medskip

\item[Case 2] $p=\lambda_0=1-1/[2\log(1+\sqrt{2})]=0.4327\cdots$. Then (\ref{eqn:2.3}) becomes
\begin{equation}\label{eqn:2.28}
\varphi_{\lambda_0}(0)=0
\end{equation}
and (\ref{eqn:2.5}) leads to
\begin{align}\label{eqn:2.29}
f_{\lambda_0}(0)&=\sqrt{2}\lambda_0>0,\quad f_{\lambda_0}(1)=0\\ \label{eqn:2.30}
f'_{\lambda_0}(t)&=\frac{1}{4\delta^2}\left[\frac{C(t)}{\sqrt{2-t^2}}+D(t)\right]
\end{align}
where $$\delta=\frac{1}{2(1-\lambda_0)}=\log(1+\sqrt{2})=0.88137\cdots,$$
\begin{align}\label{eqn:2.31}
C(t)&=2\delta t(1-2\delta+6t-4t^3),\\ \label{eqn:2.32}
D(t)&=-4+(8-16\delta)t+(9+12\delta-12\delta^2)t^2+(16\delta-8)t^3-5t^4.
\end{align}

\medskip

We divide the discussion of this case into two subcases.

\begin{description}
\item[Subcase 2.1]$t\in(0,1/2]$. Then differentiating (\ref{eqn:2.32}) yields
\begin{align}\label{eqn:2.33}
D'(t)&=8-16\delta+(18+24\delta-24\delta^2)t+(48\delta-24)t^2-20t^3,\\ \label{eqn:2.34}
D''(t)&=6[3+4\delta-4\delta^2+(16\delta-8)t-10t^2].
\end{align}
From (\ref{eqn:2.33}) and scientific computation we know that there exists unique $t_3=0.2555\cdots$ in $(0,1/2]$ satisfying the equation
$D'(t_3)=0$. It follows from $(\ref{eqn:2.34})$ that $D''(t_3)=25.9469\cdots>0$ and $t_3$ is a unique extreme minimum point of $D(t)$ in $(0,1/2]$. So we have
\begin{equation}\label{eqn:2.35}
D(t)\leq\max\{D(0),D(1/2)\}=-4<0
\end{equation}
for $0<t<1/2$.

\medskip

From (\ref{eqn:2.23}) and (\ref{eqn:2.35}) one has
\begin{equation}\label{eqn:2.36}
4\delta^2\sqrt{2-t^2}f'_{\lambda_0}(t)=C(t)+D(t)\sqrt{2-t^2}<C(t)+D(t):=
E(t).
\end{equation}
It follows from (\ref{eqn:2.31}) and (\ref{eqn:2.32}) together with (\ref{eqn:2.36}) that
\begin{align}\label{eqn:2.37}
E'(t)&=8-14\delta-4\delta^2+(18+48\delta-24\delta^2)t+(48\delta-24)t^2-(20+32\delta)t^3,\\ \label{eqn:2.38}
E''(t)&=6[3+8\delta-4\delta^2)t+(16\delta-8)t-(10+16\delta)t^2].
\end{align}
Computational and numerical experiments together with (\ref{eqn:2.37}) show that there exists unique $t_4=0.17164\cdots$ in $(0,1/2]$ satisfying the equation $E'(t_4)=0$. It follows from (\ref{eqn:2.38}) that $E''(t_4)=43.686\cdots>0$ and $t_4$ is a unique extreme minimum point of $E(t)$ in $(0,1/2]$. Thus we obtain
\begin{equation}\label{eqn:2.39}
E(t)\leq\max\{E(0),E(1/2)\}=-2.50591\cdots<0
\end{equation}
for $t\in(0,1/2]$.

\medskip

Inequalities (\ref{eqn:2.36}) and (\ref{eqn:2.39}) give
\begin{equation*}
f'_{\lambda_0}(t)<0
\end{equation*}
for $t\in(0,1/2]$.

\medskip

\item[Subcase 2.2]$t\in(1/2,1)$. Then (\ref{eqn:2.31}) leads to
\begin{align}\label{eqn:2.40}
C'(t)&=2\delta(1-2\delta+12t-16t^3),\\ \label{eqn:2.41}
C''(t)&=24\delta(1-4t^2)<0.
\end{align}
It follows from (\ref{eqn:2.40}) and scientific computation that there exists unique $t_5=0.8322\cdots$ in $(1/2,1)$ satisfying the equation $C'(t_5)=0$. Then (\ref{eqn:2.41}) leads to the conclusion that $C(t)$ is strictly increasing in $(1/2,t_5]$ and strictly decreasing in $(t_5,1)$. Hence, we have
\begin{equation}\label{eqn:2.42}
C(t)\geq\inf\{C(1/2),C(1)\}=1.531\cdots>0
\end{equation}
for $t\in(1/2,1)$.

\medskip

On the other hand, equation (\ref{eqn:2.34}) gives
\begin{align}\label{eqn:2.43}
D''(1/2)&=23.81\cdots>0,\\ \label{eqn:2.44} D''(1)&=-2.87\cdots<0,\\
\label{eqn:2.45} D'''(t)&=24(4\delta-2-5t)<12(8\delta-9)<0
\end{align}
for $t\in(1/2,1)$.

\medskip

Equations (\ref{eqn:2.43})-(\ref{eqn:2.45}) imply that there exists
$t_6\in(1/2,1)$ such that $D'(t)$ is strictly increasing in
$(1/2,t_6]$ and strictly decreasing in $[t_6,1)$. Therefore, we have
$D'(t)\geq\inf\{D'(1/2),D'(1)\}=6.229\cdots>0$ for $t\in(1/2,1)$,
which implies that $D(t)$ is strictly increasing in $(1/2,1)$.

\medskip

From the monotonicity of $D(t)$ in $(1/2,1)$ and $C(t)$ in
$(1/2,t_5]$ together with (\ref{eqn:2.42}) we know that
$C(t)/\sqrt{2-t^2}+D(t)$ is strictly increasing in $(1/2,t_5]$.
Equations (\ref{eqn:2.30})-(\ref{eqn:2.32}) lead to
\begin{equation}\label{eqn:2.46}
f'_{\lambda_0}(1/2)=-0.926\cdots<0,\quad
f'_{\lambda_0}(t_4)=0.5193\cdots>0.
\end{equation}
It follows from (\ref{eqn:2.30}) and (\ref{eqn:2.46}) together with
the monotonicity of $C(t)$ in $(1/2,t_5]$ that there exists
$t_7\in(1/2,t_5)$ such that $f'_{\lambda_0}(t)<0$ for
$t\in(1/2,t_7)$ and $f'_{\lambda_0}(t)>0$ for $t\in(t_7,t_5]$.

\medskip

If $t\in(t_5,1)$, then from the monotonicity of $C(t)$ in $[t_5,1)$ and $D(t)$ in $(1/2,1)$ we have
\begin{equation}\label{eqn:2.47}
\frac{C(t)}{\sqrt{2-t^2}}+D(t)>\frac{C(1)}{\sqrt{2}}+D(t_4)=0.6859\cdots>0
\end{equation}

Equation (\ref{eqn:2.30}) and (\ref{eqn:2.47}) lead to $f'_{\lambda_0}(t)>0$ for $t\in(t_5,1)$. Therefore, we know that $f'_{\lambda_0}(t)<0$ for $t\in(1/2,t_7)$ and $f'_{\lambda_0}(t)>0$ for $t\in(t_7,1)$.

\medskip

Combining the last conclusions in subcases 2.1 and 2.2 we clearly
see that $f_{\lambda_0}(t)$ is strictly decreasing in $(0,t_7]$ and
strictly increasing in $[t_7,1)$. Then (\ref{eqn:2.4}) and
(\ref{eqn:2.29}) lead to the conclusion that there exists
$t_0\in(0,t_7)$ such that $\varphi_{\lambda_0}(t)$ is strictly
increasing in $(0,t_0]$ and strictly decreasing in $[t_0,1)$.
\end{description}

\medskip

Therefore, $\varphi_{\lambda_0}(t)>0$ for all $t\in(0,1)$ follows from (\ref{eqn:2.2}) and (\ref{eqn:2.28}) together with the monotonicity of $\varphi_{\lambda_0}(t)$.
\end{description}

\end{proof}

\section{Proof of Theorem 1.1}

\bigskip

\begin{proof}[\bf Proof of Theorem 1.1]
Since $M(a,b),G(a,b)$ and $C(a,b)$ are symmetric and homogeneous of
degree 1. Without loss of generality, we assume that $a>b$. Let
$x=(a-b)/(a+b)\in(0,1)$, $0<p<1$ and
$\lambda_0=1-1/[2\log(1+\sqrt{2})]=0.4327\cdots$. Then
\begin{equation}\label{eqn:3.1}
\frac{C(a,b)-M(a,b)}{C(a,b)-G(a,b)}=\frac{(1+x^2)\sinh^{-1}(x)-x}{(1+x^2-\sqrt{1-x^2})\sinh^{-1}(x)}
\end{equation}
and
\begin{align}\label{eqn:3.2}
\begin{split}
\lefteqn{pG(a,b)+(1-p)C(a,b)-M(a,b)}\\
&=A(a,b)\left[p\sqrt{1-x^2}+(1-p)(1+x^2)-\frac{x}{\sinh^{-1}(x)}\right]\\
&=\frac{A(a,b)[p\sqrt{1-x^2}+(1-p)(1+x^2)]}{\sinh^{-1}(x)}\varphi_p(\sqrt{1-x^2})
\end{split}
\end{align}
where $\varphi_p(t)$ is defined as in Lemma \ref{le:2.1}.

Note that
\begin{align}\label{eqn:3.3}
\lim\limits_{x\rightarrow0^+}\frac{(1+x^2)\sinh^{-1}(x)-x}{(1+x^2-\sqrt{1-x^2})\sinh^{-1}(x)}&=\frac{5}{9},\\ \label{eqn:3.4}
\lim\limits_{x\rightarrow1^-}\frac{(1+x^2)\sinh^{-1}(x)-x}{(1+x^2-\sqrt{1-x^2})\sinh^{-1}(x)}&=1-\frac{1}{2\log(1+\sqrt{2})}=\lambda_0.
\end{align}

Equation (\ref{eqn:3.2}) and Lemma \ref{le:2.1} lead to the conclusion that the double inequality
$$\frac{5}{9}G(a,b)+\frac{4}{9}C(a,b)<M(a,b)<\lambda_0G(a,b)+(1-\lambda_0)C(a,b)$$
holds for all $a,b>0$ with $a\neq b$.
\setlength\leftmargini{.4cm}
\begin{itemize}
\item If $p_1<5/9$, then equations (\ref{eqn:3.1}) and (\ref{eqn:3.3}) imply that there exists $0<\delta_1<1$ such that $M(a,b)<p_1 G(a,b)+(1-p_1)C(a,b)$
for all $a,b>0$ with $(a-b)/(a+b)\in(0,\delta_1)$.

\medskip

\item If $p_2>\lambda_0$, then equations (\ref{eqn:3.1}) and (\ref{eqn:3.4}) imply that there exists $0<\delta_2<1$
such that $M(a,b)>p_2 G(a,b)+(1-p_2)C(a,b)$ for all $a,b>0$ with
$(a-b)/(a+b)\in(1-\delta_2,1)$.
\end{itemize}
Therefore, we conclude that in order for the inequalities
(\ref{eqn:1.2}) to be valid it is necessary and sufficient that
$\alpha\geq 5/9$ and $\beta\leq
1-1/[2\log(1+\sqrt{2})]=0.4327\cdots$.
\end{proof}

\end{document}